\numberwithin{equation}{section}
\newtheorem{introthm*}{Main Result}
\newtheorem{theorem}{Theorem}[section]
\newtheorem{lemma}[theorem]{Lemma}
\newtheorem{proposition}[theorem]{Proposition}
\newtheorem{corollary}[theorem]{Corollary}
\theoremstyle{definition}
\newtheorem{definition}[theorem]{Definition}
\newtheorem{def-prop}[theorem]{Definition-Proposition}
\newtheorem{remark}[theorem]{Remark}
\newtheorem{example}[theorem]{Example}
\newtheorem*{acknowledgment}{Acknowledgments}
\newtheorem{notation}[theorem]{Notation}
\DeclareMathOperator{\Ass}{Ass}
\newcommand{\ceil}[1]{\lceil #1 \rceil}
\newcommand{\floor}[1]{\lfloor #1 \rfloor}
\def\pp{{\frak p}}
\def\1{{\bf 1}}
\def\0{{\bf 0}}
\begin{document}
	
	\title{Resurgence number of matoridal configuration}
	
	\author{Haoxi Hu}
	\address{Tulane University, Department of Mathematics and Statistics,
		6823 St. Charles Avenue
		New Orleans, LA 70118, USA}
	
	\email{hhu5@tulane.edu}

	\keywords{Resurgence number, symbolic powers, matorid}
	\subjclass[2020]{13A70, 52B40, 14Q99}
	
	\begin{abstract}
This article gives a new upper bound for the resurgence number of symbolic powers of matroidal configuration in the following situations: the height of the matroidal configuration is big, or the height is small, and the corresponding simplicial complex of the matroidal configuration is \emph{peaked}. The \emph{Peaked simplicial complex} is a generalization of bipartite graph. Furthermore, the article also gives a clean formula to compute the resurgence number and the strict containment of generalized uniform matroidal configuration which includes case of star configuration of hypersurfaces.
	\end{abstract}
	\maketitle
	\section{Introduction}

Given an ideal $I$ of a Noetherian ring $R$, we can always find its \emph{$m$-th symbolic power} as follows:

$$I^{(m)}=\bigcap_{\pp\in \Ass(I)} I^mR_\pp \cap R .$$

\vspace{3mm}

Symbolic powers have been extensively studied in commutative algebra. An important approach to understanding their behavior is through containment problem. The containment problem starts at this question: when is symbolic power of an ideal $I^{(m)}$ contained in ordinary power of an ideal $I^r$? The celebrated result of uniform bound was proved by \cite[Hochster-Huneke]{CSOPI} and \cite[Ein-Lazarsfeld-Smith]{UBSPSV} in different settings of characteristic of the field gave an important answer:  Let $R$ be a regular ring containing a field, and let $I$ be any proper ideal of $R$, then $I^{(hn)} \subset I^{n}$ for all $n \geq 1$, where $h$ is the big height of $I$, i.e. the largest height of any associated primes of $I$. Subsequently, the question of containment emerged as a central problem in this field, then a new concept called \emph{resurgence number of symbolic powers}, denoted as $\rho(I)$, was introduced in \cite[Bocci-Harbourne]{RIPCP} as follows:

$$\rho(I) =  \sup \left\{ \frac{m}{r} ~|~ I^{(m)} \not\subseteq I^r, m\geqslant 1, r\geqslant 1 \right\}$$

\vspace{4mm}

Computing resurgence number is very difficult in general. Resurgence number is even not known for \emph{star configuration of hypersurfaces} (or just \emph{star configuration} for abbreviation). The \emph{star configuration} was first defined in \cite{BocciHarbourne2010} with hyperplane setting, then generalized in \cite{MCSASPOTI} with hypersurface setting. Only resurgence number of some special cases of \emph{star configuration} are known, see Remark \ref{remark: special cases of star config} for more details. In this paper, we will give a formula to compute the resurgence number and the strict containment of \emph{generalized uniform matroidal configuration} which includes case of \emph{star configuration of hypersurfaces}. 

Our method to tackle the containment problems replies on matroid and its corresponding simplicial complex, the ideal we discuss in the paper is \emph{$C$-matroidal ideal} and its corresponding \emph{$C$-matroidal configuration} (also called as \emph{matroidal configuration}). \emph{$C$-matoridal ideal} is a monomial ideals defined on a polynomial ring over a field, where supports of generators satisfy axioms of circuits of a matroid (see the Example \ref{quick example of matroidal ideal}), the corresponding \emph{matroidal configuration} has similar settings by replacing variables with a regular sequence consisting of homogeneous components, so \emph{$C$-matroidal configuration} builds the connection between \emph{generalized uniform matroid} $U_{c,n}$ and \emph{star configuration} (see Example \ref{example: uniform matroid and star configuration}). \emph{$C$-matroidal ideal} is the usual definition of the matroidal ideal, but there is also another way to define matroidal ideal via bases of a matroid, we will call it \emph{$B$-matroidal ideal}, and \emph{$B$-matroidal ideal} is the \emph{Alexandar dual} of \emph{$C$-matroidal ideal} with respect to their corresponding simplicial complexes. 

\vspace{3mm}

In this article, all results of \emph{$C$-matroidal ideal} can be applied to its corresponding \emph{matroidal configuration} as well (see Remark \ref{C-matroidal ideal and matroidal configuration}).

\begin{introthm*}[\Cref{theorem: resurgence number of generalized star configuration}]

Let $I$ be the $C$-matroidal ideal corresponding to uniform matroid $U_{c,n}$. Then we have the followings:

\begin{itemize}

\item[(1)] The strict containment:
\begin{align*}
I^{r} \; \text{contains} \; I^{(m)} \; \text{if only if} \; r \leq \frac{\ceil{\frac{m}{c}} (n-c) + m}{n-c+1}
\end{align*}

\item[(2)] And the resurgence number:
\begin{align*}
\rho(I) = \frac{c(n-c+1)}{n}
\end{align*}

In particular, if we apply the formula to its corresponding matroidal configuration, let $n=s$ where $s$ is the number of homogeneous components, then it is also the resurgence number of the star configuration.

\end{itemize}

\end{introthm*}

Other than looking at matroid structure directly, we also look at its corresponding simplicial complex structures (see Definition \ref{def of matroidal ideal}), we define a new class of simplicial complex called \emph{peaked simplicial complex}, which generalizes the bipartite graph (see Example \ref{example: peaked simplicial complex to bipartite graph}). When the height of the \emph{$C$-matroidal ideal} is big, or the height is small and the corresponding simplicial complex is \emph{peaked}, a new upper bound for the resurgence number is given, which is our second main result. Before our result, the upper bound of resurgence of \emph{matroidal ideal and configuration} is $c$ (the height of \emph{$C$-matroidal ideal}), our new bound is $c-1$.

\begin{introthm*}[\Cref{theorem: resurgence bound of matroidal configuration}]

let $k = \floor{\frac{n}{n-c+1}}$ where $n$ is the number of variables used in the $C$-matroidal ideal $I$, $c$ is the height of $I$. Then if $k > 1$, or $k = 1$ and the corresponding simplicial complex $\Delta$ is peaked, we have resurgence upper bound for $I$ as follows:
\begin{align*}
\rho(I) \leq c-1
\end{align*}

In particular, this result can be also applied to corresponding matroidal configuration.

\end{introthm*}

We want to mention that the simplicial complex corresponding to \emph{generalized uniform matroid} $U_{c,n}$ is not \emph{peaked}. Furthermore, the bound from \textbf{Main Result 2} will fail easily if $k=1$ and $\Delta$ is not \emph{peaked}, see the simple Example \ref{example: non-peaked condition fails}, that is why \emph{Non-peaked} simplicial complex with $k=1$ is the only remaining part that cannot be improved for the bound.

\vspace{1mm}

\begin{acknowledgment}
We honorably thank professor Tewodros Amdeberhan for giving an idea to us to prove Lemma \ref{lemma: monomial assignment}. We are also very thankful to professor Paolo Mantero and Doctor Vinh Nguyen for some helpful discussions.
\end{acknowledgment}

\vspace{3mm}

\section{Preliminary}\label{section: Preliminary}

\subsection{Matroid}

Bases and circuits are two important concepts which characterize and also define matroids, so they lead to two types of matroidal ideal, we will only focus on one type. Before we get into details, let's we recall some definitions:

\begin{definition}

The basis $\mathcal{B}$ of a matroid $M$ defined on a ground set $E$ is a nonempty collection of subsets of $E$ satisfies the following properties:
\begin{itemize}

\item[(1)] (Basis exchange property) For any $B_1, B_2 \in \mathcal{B}$, and for any $v \in B_1 \backslash B_2$, there exist a $w \in B_2 \backslash B_1$ such that $(B_1 - v) \cup \{ w \} \in \mathcal{B}$.

\item[(2)] (Symmetric basis exchange property) For any $B_1,B_2 \in \mathcal{B}$ and for any $v \in B_1 \backslash B_2$, there exist a $w \in B_2 \backslash B_1$ such that both $(B_1 - v) \cup \{ w \} \in \mathcal{B}$ and $(B_2 - w) \cup \{ v \} \in \mathcal{B}$.

\item[(3)] (Symmetric multi-basis exchange property) For any $B_1, B_2 \in \mathcal{B}$, and for any subset $A \subseteq B_1 \backslash B_2$, there exist a subset $B \subseteq B_2 \backslash B_1$ such that both $(B_1 - A) \cup B \in \mathcal{B}$ and $(B_2 - B) \cup A \in \mathcal{B}$.

\end{itemize}

\end{definition}

Note that \emph{basis} of matroid is a collection of maximal independent sets of the matroid.

\begin{definition}

A collection of \emph{circuits}, denoted as $\mathcal{C}$, of a matroid $M$ defined on a ground set $E$ is a nonempty collection of subsets of $E$ satisfies the following properties:

\begin{itemize}

\item[(1)] (Nonempty) $\emptyset \notin \mathcal{C}$.

\item[(2)] (Minimality) For any $C_1,C_2 \in \mathcal{C}$, if $C_1 \subseteq C_2$, then $C_1 = C_2$.

\item[(3)] (Circuit elimination) For any distinct $C_1, C_2 \in \mathcal{C}$, if $e \in C_1 \cap C_2$, then there exists $C_3 \in \mathcal{C}$ such that $C_3 \subseteq (C_1 \cup C_2) - e$.

\end{itemize}

\end{definition}

\begin{definition}

Let matorid $M = (E,\mathcal{B})$ defined by the ground set $E$ and basis $\mathcal{B}$, then the \emph{dual} of $M$, denoted as $M^*$, is defined by its basis $\mathcal{B}^*$ as the following:
\begin{align*}
B^* = \{ E \backslash B: \, B \in \mathcal{B} \}
\end{align*}

\end{definition}

Note that $M^*$ is also a matriod, and $M^{**} = M$. A \emph{circuit} of matroid is a minimal dependent set of the matroid in terms of inclusion. A \emph{cocircuit} is a \emph{circuit} of the \emph{dual} of the matroid.

\subsection{C-Matroidal ideal and Configuration}\label{subsection: C-matroidal ideal and configuration}

\begin{definition}\label{def of matroidal ideal}\cite[definition 2.10]{matoridalsympow}
A squarefree monomial ideal $I \subset T$ is \emph{$C$-matroidal} if $I$ is the \emph{cover ideal} of a matroid associated to a simplicial complex $\Delta$. More specifically, given a matroid $M$ associated with simplicial complex $\Delta$, the support of a generator of \emph{Stanley–Reisner ideal} is exactly the \emph{circuit} of $M$, so list of supports and list of circuits are one-to-one correspondence. If we take the \emph{cover ideal} of $\Delta$, the support of a generator of \emph{cover ideal} will be \emph{cocircuit} of $M$. See the Example \ref{quick example of matroidal ideal} for the construction.

Recall the \emph{cover ideal} of a simplicial complex $\Delta$ is $J(\Delta) = \bigcap_{F \in \mathcal{F}(\Delta)} \mathfrak{p}_F$ where $\mathcal{F}(\Delta)$ is the collection of all facets of $\Delta$ and $\mathfrak{p}_F = (x_i \, : \, i \in F)$. the \emph{Stanley–Reisner ideal} of $\Delta$ is $I_{\Delta} = \bigcap_{F \in \mathcal{F}(\Delta)} \mathfrak{p}_{[n]-F}$ where $[n]$ is the ground set of $\Delta$.

\end{definition}

\begin{remark}\label{more defs of matroidal ideal}

There are more equivalent conditions to define \emph{$C$-matroidal ideal} and its construction related to another way to define matroidal ideal (see \cite[definition 2.10]{matoridalsympow}), but here we focus on Definition \ref{def of matroidal ideal}.

\emph{$C$-matroidal ideal} is the usual definition of \emph{matroidal ideal}, while another common definition of \emph{matroidal ideal} is given via \emph{bases} of the matroid, which is exactly the \emph{facet ideal of the matroid}. Therefore, the support of $B$-matroidal ideal the generators are \emph{bases}. Note that $B$-matroidal ideal is the \emph{Alexandar dual} of \emph{$C$-matroidal ideal}.

\end{remark}

\begin{example}\label{quick example of matroidal ideal}

Given a simplicial complex $\Delta$ defined on a ground set $\{ 1, 2, 3, 4 \}$, and its facets are $\{ \{1,2\}, \{1,3\}, \{2,3\}, \{2,4\}, \{3,4\}  \}$, take the cover ideal of these facets, we have a collection of generators $\{  124, 134, 23 \}$, which satisfy the axioms of circuits of matroid, so this a $C$-matroidal ideal. Take the \emph{Alexandar dual} of this ideal, we have a new ideal with generators $\{ 12,13,23,24,34 \}$, and they satisfy the axiom of bases of matroids, so the new ideal is \emph{$B$-matroidal ideal}.

\end{example}

\begin{definition}\cite[definition 2.1]{matoridalsympow}
Let $R$ be a polynomial ring over a field, and let $\mathcal{F} = \{ f_1, \, ... \, , f_s  \}$ be our collection of homogeneous components of $R$. Let T be a polynomial ring over the same field of $s$ variables such that the ring homomorphism $\phi: T \longrightarrow R$ sending variable $x_i$ of $T$ to $f_i$ for all homogeneous components. Let $I \subset T$ be any monomial ideal, we denote $I_*$ for the ideal generated by $\phi(G(I))$ where $G(I)$ is the collection of minimal generators of $I$. Now we define \emph{matroidal configuration}:

Fix $I$ be a $C$-matoidal ideal in $T$, let $c = ht(I)$, and assume any $c+1$ of $f_i$'s form a regular sequence. Then $I_*$ is called a specialization of $I$, or \emph{the defining ideal of a $C$-matroidal configuration (of hypersurfaces)}, for abbreviation, just \emph{matroidal configuration}. If all $f_i$'s are linear and $dim(R) = c+1$, we call $I_*$ is \emph{the defining ideal of a matroidal configuration of points}.

We want to emphasize that since $I$ is any monomial ideal, there might be some variables not used as monomials for $I$, so we let $n$ be the number of elements of union of all facets of a simplicial complex related to $I$, i.e. support of all generators of $I$. Note that $c \leq n \leq s$. Similarly, for the corresponding matroidal configuration $I_*$, $n$ is the support of all generators with respect to homogeneous components.
\end{definition}

\begin{remark}\label{C-matroidal ideal and matroidal configuration}

We should mention that $(I_*)^{r} = (I^{r})_*$ for all $r \leq 1$ for any monomial ideal $I$. The celebrating result from \cite[theorem 3.6(1)]{MCSASPOTI} shows that $(I_*)^{(m)} = (I^{(m)})_*$ for any \emph{$C$-matroidal ideal} $I$ and corresponding \emph{matroidal configuration} $I_*$ generated by monomial forms of $\mathcal{F}$ defined above.

\end{remark}

\begin{example}\label{example: uniform matroid and star configuration}

The \emph{generalized uniform matroid} $U_{c,n}$ is defined by ground set with $s$ elements. The union of its independent sets contain $n$ elements, while the independent sets are exactly all possible subsets of size at most $c$. Our definition of uniform matorid slightly generalizes the usual definition of uniform matroid in which $n$ is always $s$. It is not hard to see that the \emph{defining ideal of a matroidal configuration} of the usual definition of uniform matroid is exactly the \emph{star configuration of hypersurfaces}. See \cite[example 3.4]{MCSASPOTI} for the original idea, it also explains that it satisfies the definition via circuits of a matroid.

\end{example}

\subsection{Peaked Simplicial Complex}

It is known that $\Delta$ is a \emph{squarefree glicci simplicial complex} (see \cite{MCSASPOTI}, and see \cite{NAGEL20082250} for the definition of \emph{glicci simplicial complex}). However, the way to construct such simplicial complex does not help us a lot for ideal containment problems, so we want to classify $\Delta$ based on a different setting, where our definition of \emph{peaked simplicial complex} comes into play.

\begin{definition}

Given a simplicial complex $\Delta$ defined on vertex set $[n]$, let $\mathcal{F}_i(\Delta)$ be the collection of all facets of $\Delta$ that contain $\{i\}$. We call $\Delta$ is \emph{peaked} if there exist a strict subset $A \not\subseteq [n]$ such that the collection of all facets satisfies $ \cup_{i \in A} \mathcal{F}_i(\Delta) = \mathcal{F}(\Delta)$, and these $\mathcal{F}_i$'s are disjoint.

\end{definition}

\begin{example}\label{example: peaked simplicial complex to bipartite graph}

When all the facets of a \emph{peaked simplicial complex} are either edges or vertices, then it's exactly a bipartite graph. Considering $A$ as one of the two partitions, there is no edge connecting any two vertices in $A$, say $i$ and $j$, otherwise $\mathcal{F}_i(\Delta)$ and $\mathcal{F}_j(\Delta)$ will share edges. Because $ \cup_{i \in A} \mathcal{F}_i(\Delta) = \mathcal{F}(\Delta)$, there is no edge connecting any two vertices in the other partition as well.

\end{example}

\begin{example}

We can look at the \emph{peaked simplicial complex} in more general case. If there are facets with dimension greater than $0$, take one such element, say $i$. We can consider each facet of $\mathcal{F}_i$ is a ``wall" of a ``pyramid-like" shape with ``peak vertex" at $\{i\}$. Since all $\mathcal{F}_j$ are disjoint where $j \in A$, the \emph{peaked simplicial complex} consists of multiple pyramid-like shapes, that is how we gave the name of this type of simplicial complexes.

\end{example}

\begin{notation}

The notations $\Delta$, $T$, $I$, $I_*$, $\mathcal{F}$, $\mathcal{F}_i$, $c$, $s$, $n$, and $U_{r,n}$ we given from the definitions and examples above will be used throughout the rest of the paper unless otherwise is stated.

\end{notation}

\vspace{3mm} 

\section{Proof of the main results}\label{section: proof}

Our method to show the main results is to study the properties of circuits of matroid and facets of corresponding simplicial complex, then we describe the combinatorial structures of the squarefree parts of the supports of the \emph{$C$-matroidal ideal}. \cite[Theorem 3.7]{matoridalsympow} (our Theorem \ref{theorem: formula for minimal generators of sym pow of star config}) describes the generators of symbolic powers of $C$-matroidal ideals, and \cite[Proposition 3.19]{matoridalsympow} (our Proposition \ref{proposition: description of squarefree parts}) describes the squarefree parts of first $c$ symbolic powers via simplicial complex. Following Theorem \ref{theorem: formula for minimal generators of sym pow of star config}, we develop a formula saying that all symbolic powers are generated by the first $c$ symbolic powers, which is Corollary \ref{corollary: binomial expansion of sym pow of star config}. Moreover, following the Proposition \ref{proposition: description of squarefree parts}, we give more details of the structure of corresponding simiplicial complex, which are more applicable for solving problems, and it is our Proposition \ref{prop: combinatorial correspondence of squarefree parts}.

Denote squarefree part of $I^{(m)}$ as $SF(I^{(m)})$. As we mentioned in Remark \ref{C-matroidal ideal and matroidal configuration}, results of $C$-matroidal ideal $I$ can be applied to matroidal configuration $I_*$, so similarly, when we talk about ``degree" matroidal configuration, the degree for $I_*$ is graded with respect to homogeneous components from $\mathcal{F}$.
	
	\begin{theorem}\label{theorem: formula for minimal generators of sym pow of star config}
		\cite[Theorem 3.7]{matoridalsympow} Let $I$ be the usual settings. For any $m \geq 1$, a minimal generating set of $I$ is given by 

\begin{center}

$G_{I^{(m)}} = \Bigl\{$\begin{tabular}{m{33mm} | m{90mm}}
$M = M_{(1)} \cdots M_{(t)}$ & $M_{(i)} \in G_{I^{(c_i)}}$ and is squarefree, where $1 \leq c_i \leq ht(I)$ with $\sum c_i = m$ and $supp(M_{(1)}) \supset ... \supset supp(M_{(t)})$
\end{tabular} $\Bigr\}$

\end{center}
		
	\end{theorem}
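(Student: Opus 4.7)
The plan is to prove both inclusions in the claimed description of the minimal generating set. The inclusion ``$\supseteq$'' is immediate from iterated application of the multiplicativity of symbolic powers, $I^{(a)} \cdot I^{(b)} \subseteq I^{(a+b)}$: given any $M = M_{(1)} \cdots M_{(t)}$ with $M_{(i)} \in I^{(c_i)}$ and $\sum c_i = m$, we get $M \in I^{(m)}$. No matroidal hypothesis is needed for this direction.

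For the nontrivial inclusion ``$\subseteq$'', I would first use that $I$ is a squarefree monomial ideal to rewrite
\[
I^{(m)} \;=\; \bigcap_{F \in \mathcal{F}(\Delta)} \mathfrak{p}_F^{\,m},
\]
so membership of a monomial $M = \prod x_i^{a_i}$ in $I^{(m)}$ becomes the numerical condition $\sum_{i \in F} a_i \geq m$ for every facet (i.e.\ basis) $F$. Fix a minimal generator $M \in I^{(m)}$ and perform a \emph{peeling step}: choose $M_{(1)}$ to be a squarefree divisor of $M$ maximizing the integer $c_1$ with $M_{(1)} \in I^{(c_1)}$ while still keeping $M / M_{(1)} \in I^{(m - c_1)}$. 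This construction rests on the combinatorial description of squarefree parts recorded in Proposition~\ref{proposition: description of squarefree parts} (squarefree generators of $I^{(k)}$ correspond to certain cocircuit unions meeting every basis in at least $k$ elements). Minimality of $M$ then forces $M_{(1)}$ to be a minimal squarefree generator of $I^{(c_1)}$, the residue $M / M_{(1)}$ to be a minimal generator of $I^{(m - c_1)}$, and crucially $\supp(M / M_{(1)}) \subsetneq \supp(M_{(1)})$; the strict containment is because otherwise some exponent of $M$ could be decreased without escaping any $\mathfrak{p}_F^{\,m}$, contradicting minimality. I would then induct on $m$ applied to $M / M_{(1)}$ to obtain the full factorization. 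The upper bound $c_i \leq \height(I) = c$ is automatic, since a squarefree monomial cannot lie in $\mathfrak{p}_F^{\,k}$ for $k > |F| = c$.

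The main obstacle is justifying the peeling step itself: verifying that $M_{(1)}$ is genuinely a \emph{minimal} squarefree generator of $I^{(c_1)}$ (rather than a proper multiple of one in a lower symbolic power) and that the successive supports nest strictly. Both points depend essentially on the matroidal structure of $\Delta$. In a non-matroidal setting, the facets realizing the defining minimum $\min_F (\sum_{i \in F} a_i - m)$ at each layer could drift between peels, destroying the nested chain. The circuit- and cocircuit-exchange axioms, encoded in the combinatorial structure of Proposition~\ref{proposition: description of squarefree parts}, are exactly what synchronize these minima across the successive layers and make the induction close.
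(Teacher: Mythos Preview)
The paper does not contain a proof of this theorem. It is quoted verbatim from \cite[Theorem~3.7]{matoridalsympow} and used as a black box throughout Section~\ref{section: proof}; the paper's own contributions (Corollary~\ref{corollary: binomial expansion of sym pow of star config}, Proposition~\ref{prop: combinatorial correspondence of squarefree parts}, etc.) take this statement as an established input. So there is no in-paper argument against which your proposal can be compared.

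That said, your sketch is a plausible outline of how the cited result is actually proved. The easy inclusion is correct as you state it. For the hard inclusion, the peeling strategy is the right idea, but as you yourself flag, the crux is showing that the greedily chosen squarefree layer $M_{(1)}$ is a \emph{minimal} generator of $I^{(c_1)}$ and that the supports nest. Your appeal to Proposition~\ref{proposition: description of squarefree parts} is appropriate in spirit, but note that in the paper this proposition is also imported from \cite{matoridalsympow} and is logically downstream of (or at least parallel to) the theorem you are trying to prove, so invoking it here risks circularity unless you check the dependency structure in the original source. The genuine work---using basis/circuit exchange to guarantee that the facets attaining the minimum $\sum_{i\in F} a_i$ behave coherently across peels---is acknowledged but not carried out in your proposal; that is precisely the matroidal content of the argument in \cite{matoridalsympow}, and without it the induction does not close.
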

	
\medskip
	
The following is the Corollary of \cite[Theorem 3.7]{matoridalsympow}, and it is also the "refined" version of \cite[Theorem 3.9]{matoridalsympow}.
	
\begin{corollary}\label{corollary: binomial expansion of sym pow of star config} 

Using usual settings $I$. Then we have the binomial expansion as follows:

\begin{center}
$I^{(m)} = \sum\limits_{i=1}^{c} I^{(i)} \, I^{(m-i)}$ for $m > c$
\end{center}

In other words, symbolic powers of matroidal ideal are generated by the first $c$ symbolic powers.

\end{corollary}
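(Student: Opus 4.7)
The plan is to prove both containments separately and rely entirely on the structural description of minimal generators supplied by Theorem \ref{theorem: formula for minimal generators of sym pow of star config}. The inclusion $\sum_{i=1}^{c} I^{(i)} I^{(m-i)} \subseteq I^{(m)}$ is immediate from the general property $I^{(a)} I^{(b)} \subseteq I^{(a+b)}$, valid for any ideal in a Noetherian ring whose associated primes we use to define symbolic powers; this is a routine check on primary decompositions.

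For the reverse inclusion, since $I^{(m)}$ is a monomial ideal it suffices to show that every element $M \in G_{I^{(m)}}$ lies in the right-hand side. By Theorem \ref{theorem: formula for minimal generators of sym pow of star config}, such an $M$ factors as $M = M_{(1)} M_{(2)} \cdots M_{(t)}$ with each $M_{(i)}$ a squarefree element of $G_{I^{(c_i)}}$, with $1 \leq c_i \leq c$, $\sum_{i=1}^{t} c_i = m$, and $\mathrm{supp}(M_{(1)}) \supset \cdots \supset \mathrm{supp}(M_{(t)})$. Because $m > c$ and each $c_i \leq c$, necessarily $t \geq 2$, so the factorization genuinely splits.

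The key step is to peel off the leading factor $M_{(1)}$. By definition $M_{(1)} \in I^{(c_1)}$ with $c_1 \in \{1,\dots,c\}$. I claim the tail $M' := M_{(2)} \cdots M_{(t)}$ lies in $I^{(m - c_1)}$: indeed, the factors $M_{(2)},\dots,M_{(t)}$ are still squarefree, each $M_{(i)} \in G_{I^{(c_i)}}$ with $1 \leq c_i \leq c$, their exponents sum to $m - c_1$, and the nested support chain $\mathrm{supp}(M_{(2)}) \supset \cdots \supset \mathrm{supp}(M_{(t)})$ survives after deleting the outermost term. Hence $M'$ meets the hypotheses of Theorem \ref{theorem: formula for minimal generators of sym pow of star config} for the exponent $m - c_1$, which forces $M' \in G_{I^{(m-c_1)}} \subseteq I^{(m-c_1)}$. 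Consequently $M = M_{(1)} \cdot M' \in I^{(c_1)} \cdot I^{(m-c_1)} \subseteq \sum_{i=1}^{c} I^{(i)} I^{(m-i)}$, as desired.

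The only potential obstacle is the verification that the tail $M'$ still satisfies the hypotheses of Theorem \ref{theorem: formula for minimal generators of sym pow of star config}, but this is a purely combinatorial observation: squarefreeness, the bounds $c_i \leq c$, and the nesting of supports are all inherited from the original factorization. No commutative-algebra input beyond Theorem \ref{theorem: formula for minimal generators of sym pow of star config} and the basic product identity $I^{(a)} I^{(b)} \subseteq I^{(a+b)}$ is required, which is precisely why this is presented as a corollary.
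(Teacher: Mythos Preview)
Your proof is correct and follows essentially the same route as the paper: both arguments take a minimal generator $M = M_{(1)}\cdots M_{(t)}$ from Theorem~\ref{theorem: formula for minimal generators of sym pow of star config}, observe that $t\ge 2$ since $m>c$, and split off one factor to exhibit $M$ as a product in some $I^{(i)}I^{(m-i)}$. The only cosmetic difference is that you peel off the leading factor $M_{(1)}$ while the paper peels off the trailing factor $M_{(t)}$; your write-up is in fact the more careful of the two, since you explicitly verify that the remaining tail still satisfies the hypotheses of Theorem~\ref{theorem: formula for minimal generators of sym pow of star config}.
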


\begin{proof}
First know that $I^{(m)} \supset \sum_{i=1}^{c} I^{(i)}I^{(m-i)}$, we just need to show the inclusion. 

Let $M = M_{(1)}...M_{(t)}$ be the generator in $G_{I^{(m)}}$. By definition, $M_{(1)} \cdots M_{(t-1)}$ and $M_{(t)}$ are generators from the lower symbolic powers and their powers sum up to $m$. If we write $M_{(t)}$ as product of squarefree monomial factors, which violates the support inclusion rule in theorem \ref{theorem: formula for minimal generators of sym pow of star config} since $M_{(t)}$ itself is already squarefree, and we also know $M_{(t)}$ comes from $I^{(t)}$ where $1 \leq t \leq c$.
\end{proof}

Theorem \ref{corollary: binomial expansion of sym pow of star config} and \cite[Theorem 3.9]{matoridalsympow} imply all squarefree generators come from first $c$ symbolic powers. First we need to describe squarefree parts to help us to discuss the ideal containment problem, so we need to introduce the following proposition:

\begin{proposition}\cite[Proposition 3.19]{matoridalsympow}\label{proposition: description of squarefree parts}
Let $J$ be a squarefree monomial ideal, and let $\Delta$ is a simplicial complex such that $J = J(\Delta)$ (the cover ideal of $\Delta$), and $\mathcal{F}(\Delta_m) = \{ F - A \, : \, F \in \mathcal{F}(\Delta), \, A \subset F, \, |A| = m - 1 \}$. Then 
\begin{align*}
SF(J^{(m)}) = J(\Delta_m)
\end{align*}

\end{proposition}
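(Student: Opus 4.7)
The plan is to characterize precisely which squarefree monomials lie on each side of the claimed equality, reduce both to the same combinatorial condition on the support set $S \subseteq [n]$, and then conclude. The core of the argument will be a small pigeonhole comparison between $S \cap F$ and an $(m-1)$-subset $A \subset F$.

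First I would invoke the standard decomposition for symbolic powers of squarefree monomial ideals: since $J = J(\Delta) = \bigcap_{F \in \mathcal{F}(\Delta)} \mathfrak{p}_F$ and the primes $\mathfrak{p}_F$ are pairwise incomparable (hence exactly the minimal primes of $J$), the $m$-th symbolic power satisfies $J^{(m)} = \bigcap_F \mathfrak{p}_F^m$. The key elementary observation to establish next is that a squarefree monomial $x_S$ lies in $\mathfrak{p}_F^m$ if and only if $|S \cap F| \geq m$: the generators of $\mathfrak{p}_F^m$ are the degree-$m$ monomials supported in $F$, and one such generator divides the squarefree $x_S$ exactly when $S$ contains $m$ distinct elements of $F$. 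Hence $SF(J^{(m)})$ is generated by those squarefree $x_S$ satisfying $|S \cap F| \geq m$ for every $F \in \mathcal{F}(\Delta)$. On the other side, $J(\Delta_m) = \bigcap_{G \in \mathcal{F}(\Delta_m)} \mathfrak{p}_G$ is generated by the squarefree $x_S$ such that $S \cap (F - A) \neq \emptyset$ for every $F \in \mathcal{F}(\Delta)$ and every $A \subset F$ with $|A| = m - 1$.

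The heart of the proof is then to verify that these two conditions on $S$ coincide. The forward direction is an immediate count: if $|S \cap F| \geq m$, then $|S \cap (F - A)| = |S \cap F| - |(S \cap F) \cap A| \geq m - (m - 1) = 1$. For the converse I argue by contrapositive: if $|S \cap F| \leq m - 1$ for some facet $F$, extend $S \cap F$ to a subset $A \subset F$ with $|A| = m - 1$; then $S \cap (F - A) = (S \cap F) \setminus A = \emptyset$, violating the vertex cover condition for $\Delta_m$.

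The main obstacle will be the degenerate boundary cases in which some facet $F$ of $\Delta$ has size strictly smaller than $m$. When $|F| < m - 1$, no admissible $A$ exists and $F$ contributes no facet to $\mathcal{F}(\Delta_m)$, yet $\mathfrak{p}_F^m$ already contains no squarefree element so $SF(J^{(m)})$ vanishes; one then needs a convention (for example that $\emptyset$ is adjoined to $\mathcal{F}(\Delta_m)$ whenever such an $F$ is present, so that $\mathfrak{p}_\emptyset = (0)$ forces $J(\Delta_m) = 0$) for the two sides to agree. When $|F| = m - 1$ exactly, the construction directly places $\emptyset$ into $\mathcal{F}(\Delta_m)$ and both sides vanish simultaneously. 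In the paper's $C$-matroidal context every facet of $\Delta$ shares the common size dictated by the matroid, so for the range of $m$ relevant to the rest of the paper these degeneracies do not intrude; nevertheless, a complete general proof requires handling them explicitly.
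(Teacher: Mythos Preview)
The paper does not give its own proof of this proposition: it is stated as a citation of \cite[Proposition 3.19]{matoridalsympow} and used as a black box, so there is no argument in the paper to compare against. Your proof proposal is correct and supplies exactly the standard justification one would expect: the symbolic-power decomposition $J^{(m)}=\bigcap_{F}\mathfrak{p}_F^{\,m}$ for a squarefree monomial ideal, the characterization $x_S\in\mathfrak{p}_F^{\,m}\Longleftrightarrow |S\cap F|\ge m$, and the pigeonhole equivalence with the vertex-cover condition for $\Delta_m$. Your treatment of the degenerate cases $|F|<m$ is also appropriate, and your observation that in the $C$-matroidal setting all facets have common size (so those degeneracies are irrelevant in the paper's applications) is exactly right.
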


\medskip

To reveal more combinatorial structures of this proposition, we need the following lemma:

\begin{lemma}\label{lemma: compute intersection of ideals}

Let $R$ be the polynomial ring of $n$ variables where $c \leq n$. Denote $L_c$ as the collection of ideals generated by $c$ distinct variables of $R$, i.e. the element of $L_c$ is the ideal of the form $( x_{i_1}, x_{i_2}, \, ... \, , x_{i_c} )$ where $x_{i_j}$'s are all distinct. Then the intersection of all elements of $L_c$ is an ideal (minimally) generated by all possible squarefree elements of degree $n+1-c$.

\end{lemma}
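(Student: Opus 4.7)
The plan is to use the fact that each ideal in $L_c$ is a monomial ideal (indeed, a prime generated by a subset of the variables), so the intersection is automatically a monomial ideal and we only need to track which monomials belong to every $P_S := (x_i : i \in S)$ for $S \subseteq [n]$ with $|S| = c$. The membership criterion is transparent: a monomial $m$ lies in $P_S$ iff $\supp(m) \cap S \neq \emptyset$.

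First I would translate ``$m \in \bigcap_{|S|=c} P_S$'' into the combinatorial statement that $\supp(m)$ meets every $c$-subset of $[n]$. By taking complements, this is equivalent to saying that $[n] \setminus \supp(m)$ contains no $c$-subset, i.e., $|[n] \setminus \supp(m)| \leq c - 1$, or equivalently $|\supp(m)| \geq n - c + 1$. Thus the monomials in the intersection are exactly those whose support has size at least $n - c + 1$.

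Next I would observe that every such monomial is divisible by a squarefree monomial of degree $n - c + 1$ (just take any $n - c + 1$ variables from its support, each to the first power). Conversely, every squarefree monomial of degree $n - c + 1$ clearly lies in the intersection by the criterion above. Hence the intersection is generated by the squarefree monomials of degree $n - c + 1$.

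Finally, to see that this generating set is minimal, note that these squarefree monomials all have the same total degree and pairwise distinct supports, so no one of them divides another; they form an antichain in the divisibility poset and are therefore all needed as generators. The only place the argument could go wrong is in the combinatorial translation step, which I would want to state carefully (in particular, the edge case $c = n$, where $L_c$ has a single element $(x_1, \ldots, x_n)$ and the claim reduces to the trivial statement that $(x_1, \ldots, x_n)$ is generated by its squarefree monomials of degree $1$); otherwise the argument is routine.
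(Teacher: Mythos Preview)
Your argument is correct and, in fact, cleaner than the paper's. You use the standard fact that an intersection of monomial primes is a monomial ideal, then translate membership in $\bigcap_{|S|=c} P_S$ into the purely combinatorial condition $|\supp(m)| \geq n-c+1$ via complementation; from there the generating set and its minimality are immediate. The paper instead proceeds constructively: it picks a specific chain of ideals $l_1,\ldots,l_{n-c}$ together with $l=(x_{n-c+1},\ldots,x_n)$, computes that their intersection already contains the particular family $G=\{x_1\cdots x_{n-c}x_j : n-c+1\le j\le n\}$ among its generators, checks by hand that intersecting with any further $l'\in L_c$ does not kill these generators, and then invokes symmetry (``the construction is arbitrary with respect to choices of variables'') to conclude that every squarefree monomial of degree $n-c+1$ arises this way. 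Your approach avoids this case analysis entirely and also yields the minimality statement for free, whereas the paper's proof does not explicitly address minimality.
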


\begin{proof}
If two ideals share the same generators, then the intersection of these two ideals preserve corresponding generators. Without loss of generality, consider $x_1$ be in one element $l_1$ of $L_c$, finding a new element $l_2$ of $L_c$ such that it contains $x_2$ as a generator, then the intersection of these two ideals contains generator $x_1 x_2$. We can keep doing this until we have $l_{n-c}$, and consider $ l = ( x_{n-c+1}, \, ... \, , x_{n} ) \in L_c$, then $L = (\cap_{i=1}^{n-c} l_i) \cap l$ will contain list of generators $ G = \{ x_1 x_2 \cdots x_{n-c} x_j \, : \, \text{for all} \; n-c+1 \leq j \leq n \}$ as generators.

Take any element $l'$ from $L_c$ outside $\{ l_1, ... \, , l_{n-c}, l \}$, consider $l'$ intersects with $L$, we need to compare generators from $l'$ with generators from $L$. Let $x_k$ be a generator of $l'$, if $k \leq n-c$, then the intersection ideal will still contain $G$ as generators. If $k > n-c$, it follows that $x_1 x_2 \cdots x_{n-c} x_k$ is a generator of the intersection ideal, as well as $x_1 x_2 \cdots x_{n-c} x_j x_k$ where $j \neq k$. However, the latter generators are all redundant since we already had $x_1 x_2 \cdots x_{n-c} x_k$ as our generator. Note that there are some generators of $l'$ are $x_k$'s such that $k \leq n-c$, otherwise $l' = l$. Therefore, the intersection ideal will still have list of generators $G$.

Since the previous process of constructing $L$ is arbitrary with respect to choices of variables as generators, the generators of intersection of all elements of $L_c$ are all possible elements of degree $n+1-c$ because generators of $l$ can be any choice of $c$ subset of $[n]$.
\end{proof}

\begin{proposition}\label{prop: combinatorial correspondence of squarefree parts}

Let $I$ be an intersection ideal of ideals corresponding to $(c-1)$-faces from a simplex $F$ of dimension $n-1$. Then squarefree parts of $m$ symbolic power will corresponds to $(n+m-c-1)$-faces of $F$, i.e. $SF(I^{(m)}) = \{ x_{i_1} \cdots x_{i_{n+m-c}} : \, \{i_1, ... \, , i_{n+m-c}\} \subset F \}$. And $I$ is also a $C$-matroidal ideal.

\end{proposition}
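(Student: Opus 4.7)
The plan is to recognize $I$ as the cover ideal of a specific simplicial complex and then apply Proposition \ref{proposition: description of squarefree parts} together with two uses of Lemma \ref{lemma: compute intersection of ideals}: one to describe the generators of $I$ itself, and a second, with shifted parameters, to describe the generators of $SF(I^{(m)})$.

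First I would note that the $(c-1)$-faces of the $(n-1)$-dimensional simplex $F$ are exactly the $c$-subsets of $[n]$, so writing $P_A = (x_i : i \in A)$ we have $I = \bigcap_{|A|=c} P_A = J(\Delta)$, where $\Delta$ is the pure $(c-1)$-dimensional complex whose facets are all $c$-subsets of $[n]$ (the $(c-1)$-skeleton of $F$). Lemma \ref{lemma: compute intersection of ideals} applied directly to $I$ shows that $I$ is minimally generated by all squarefree monomials of degree $n-c+1$ in the variables $x_1,\dots,x_n$.

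Next I would invoke Proposition \ref{proposition: description of squarefree parts} to get $SF(I^{(m)}) = J(\Delta_m)$. Since every facet of $\Delta$ has cardinality $c$, each set $F'- A$ with $F' \in \mathcal{F}(\Delta)$ and $|A| = m-1$ is a $(c-m+1)$-subset of $[n]$; conversely every such subset $B$ arises, by extending $B$ to any $c$-subset $F'$ and taking $A = F' - B$. Therefore the facets of $\Delta_m$ are exactly the $(c-m+1)$-subsets of $[n]$, giving $J(\Delta_m) = \bigcap_{|B|=c-m+1} P_B$. A second application of Lemma \ref{lemma: compute intersection of ideals}, this time with $c$ replaced by $c-m+1$, yields that $J(\Delta_m)$ is minimally generated by all squarefree monomials of degree $n - (c-m+1) + 1 = n+m-c$, which is exactly the claimed description of $SF(I^{(m)})$. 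The boundary case $m > c$ is consistent: $\Delta_m$ becomes empty and there are no squarefree monomials of degree $n+m-c > n$.

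Finally, for the claim that $I$ is $C$-matroidal, I would observe that the supports of the minimal generators of $I$ are precisely the $(n-c+1)$-subsets of $[n]$, and these are exactly the circuits of the uniform matroid $U_{n-c,n}$. Equivalently, $\Delta$ is the matroid complex of $U_{c,n}$, so by Definition \ref{def of matroidal ideal} its cover ideal $I = J(\Delta)$ is the $C$-matroidal ideal of the dual matroid $U_{c,n}^* = U_{n-c,n}$. The main obstacle is really just bookkeeping: making sure the parameter shift from $c$ to $c-m+1$ in Lemma \ref{lemma: compute intersection of ideals} is carried out correctly, and that the edge cases $m = c$ and $m > c$ are handled cleanly, rather than any genuine combinatorial difficulty.
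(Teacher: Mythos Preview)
Your proof is correct and follows essentially the same route as the paper: both arguments identify $I$ as the cover ideal of the $(c-1)$-skeleton of $F$, apply Lemma~\ref{lemma: compute intersection of ideals} and Proposition~\ref{proposition: description of squarefree parts}, and then repeat the computation with the shifted parameter $c-m+1$ to handle $SF(I^{(m)})$. The paper's version is considerably terser (it dispatches the symbolic-power case with a single ``Similarly''), whereas you spell out the description of $\mathcal{F}(\Delta_m)$, the second invocation of Lemma~\ref{lemma: compute intersection of ideals}, the identification with the uniform matroid $U_{c,n}$ and its dual, and the edge cases $m\geq c$; these elaborations are all accurate and make the argument clearer.
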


\begin{proof}
First note that $I$ is a squarefree monomial ideal followed from the simplicial complex structure. By Proposition \ref{proposition: description of squarefree parts} and Lemma \ref{lemma: compute intersection of ideals}, it will be generated by all squarefree monomials in $\{ x_i : \, i \in F \}$ of degree $n+1-c$, this is an one-to-one correspondence to $(n-c)$-faces of $F$, then by the definition, $I$ is also a $C$-matroidal ideal. Similarly, squarefree parts of $m$ symbolic power will corresponds to $(n+m-c-1)$-faces of $F$. 
\end{proof}

\begin{proposition}\label{prop: peak and containment}

Using the usual settings $\Delta$, $I$, $c$, $n$, $I_*$, and let $k = \floor{\frac{n}{n-c+1}}$. We have the following properties:
\begin{itemize}
\item[(1)] If $k > 1$, then $I^{2} \supset I^{(c)}$, and $I^{\ceil{\frac{m}{c-1}}} \supset I^{(m)}$ for all $m > c$.

\item[(2)] If $k = 1$, and $\Delta$ is peaked, then $I^{2} \supset I^{(c)}$, and $I^{\ceil{\frac{m}{c-1}}} \supset I^{(m)}$ for all $m > c$.
\end{itemize}

In particular, the ideal containment results can be also applied to corresponding matroidal configuration $I_*$.

\end{proposition}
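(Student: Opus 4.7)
The plan is to reduce both assertions to the single inclusion $I^{(c)} \subset I^2$ and then verify that inclusion by exhibiting two disjoint cocircuits of the underlying matroid $\M$. Assuming $I^{(c)} \subset I^2$, I would deduce $I^{(m)} \subset I^{\ceil{m/(c-1)}}$ for $m > c$ as follows. By Theorem~\ref{theorem: formula for minimal generators of sym pow of star config} every generator of $I^{(m)}$ factors as $M = M_{(1)} \cdots M_{(t)}$ with $M_{(i)}$ a squarefree generator of $I^{(c_i)}$, $1 \leq c_i \leq c$, and $\sum c_i = m$. Writing $a = \#\{i : c_i = c\}$, each factor with $c_i = c$ lies in $I^{(c)} \subset I^2$ by hypothesis while each factor with $c_i < c$ lies in $I^{(c_i)} \subset I$ (since $I$ is radical), so $M \in I^{t+a}$. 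The elementary estimate $m \leq ac + (t-a)(c-1) = t(c-1) + a$ then yields $t + a \geq m/(c-1)$, and since $t+a$ is an integer, $t + a \geq \ceil{m/(c-1)}$.

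To prove $I^{(c)} \subset I^2$, I first observe that non-squarefree generators of $I^{(c)}$ factor (again by Theorem~\ref{theorem: formula for minimal generators of sym pow of star config}) as products of $t \geq 2$ squarefree pieces each lying in $I$, so such generators automatically lie in $I^t \subset I^2$. For the squarefree part I invoke Proposition~\ref{proposition: description of squarefree parts}: since the facets of $\Delta$ are the bases of $\M$ and all have size $c$, the facets of $\Delta_c$ are the singletons $\{v\}$ for $v \in [n]$, and hence $SF(I^{(c)}) = J(\Delta_c) = (x_1 \cdots x_n)$. Thus $I^{(c)} \subset I^2$ reduces to showing $x_1 \cdots x_n \in I^2$, which holds as soon as there exist two disjoint cocircuits $C_1, C_2$ of $\M$, for then the corresponding squarefree generators of $I$ multiply to a divisor of $x_1 \cdots x_n$.

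For the cocircuit step I would use the matroid-duality fact that a set $S \subset [n]$ contains a cocircuit if and only if $[n] \setminus S$ is not spanning, i.e.\ $r_\M([n] \setminus S) < c$. When $k > 1$ we have $n - c + 1 \leq c - 1$; every cocircuit of $\M$ is a circuit of the dual matroid $\M^*$ of rank $n - c$ and hence has size at most $n - c + 1 \leq c - 1$, so any cocircuit $C_1$ satisfies $r_\M(C_1) \leq |C_1| < c$ and $[n] \setminus C_1$ automatically contains a disjoint cocircuit. When $k = 1$ and $\Delta$ is peaked, let $A \subsetneq [n]$ witness peakedness; the disjointness of the $\F_i(\Delta)$ forces every basis $B$ to meet $A$ in exactly one element, and for each $a \in A$ any basis in $\F_a(\Delta)$ witnesses that $A \setminus \{a\}$ misses a basis, so $A$ is a minimal hitting set of the bases, i.e.\ a cocircuit. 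Since $A$ cannot contain a basis (that would force $|A \cap B| \geq c \geq 2$), we have $r_\M(A) < c$ and the same criterion yields a second cocircuit in $[n] \setminus A$. The main obstacle is precisely the peaked case: outside of it the inclusion $I^{(c)} \subset I^2$ can genuinely fail (cf.\ Example~\ref{example: non-peaked condition fails}), so size-based pigeonholing is insufficient and one must exploit the partition of $\F(\Delta)$ in an essential way. The passage from $I$ to the matroidal configuration $I_*$ is then immediate from Remark~\ref{C-matroidal ideal and matroidal configuration}.
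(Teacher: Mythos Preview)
Your proof is correct and follows essentially the same strategy as the paper: both reduce everything to $x_1\cdots x_n\in I^2$ by producing two minimal generators of $I$ with disjoint supports. You phrase this in matroid language (two disjoint cocircuits, found via the rank/spanning criterion), while the paper argues in part~(1) through the auxiliary uniform ideal $\tilde I$ whose generators have degree $n-c+1$, and in part~(2) by directly observing that $x_A$ and $x_{[n]\setminus A}$ lie in $I$; these are the same facts in different clothing.

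The one substantive difference is the passage from $I^{(c)}\subset I^2$ to $I^{(m)}\subset I^{\lceil m/(c-1)\rceil}$. Your counting argument ($m\le ac+(t-a)(c-1)$ forces $t+a\ge\lceil m/(c-1)\rceil$) is clean and works uniformly for both cases. The paper instead appeals to the inequality $\lceil m/(c-1)\rceil\le k\lfloor m/c\rfloor$, which is actually false in general (take $k=2$, $m=2c-1$: the left side is $3$ and the right side is $2$), so your argument in fact repairs a gap in the paper's exposition of this step.
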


\begin{proof}
$(1)$ Let $F$ be a simplex of dimension $n-1$ defined on the same vertex set of $\Delta$. Following the proof Lemma \ref{lemma: compute intersection of ideals} and Proposition \ref{prop: combinatorial correspondence of squarefree parts}, let $\tilde{I}$ be the intersection ideal of all $(c-1)$-faces from $F$, so $\tilde{I}$ is $I$ by removing some face ideals from the ideal intersection. Observe that every generator of $I$ divide some generators of $\tilde{I}$, and also every generator of $\tilde{I}$ come from some generators from generators of $I$ times some squarefree monomials.

By Proposition \ref{prop: combinatorial correspondence of squarefree parts}, the degree of generators of $\tilde{I}$ is $n-c+1$, then by assumption, we can always find $k$ generators of $\tilde{I}$ such that they don't share any common divisors and the product of them divides $x_1 \cdots x_n$. Note that $x_1 \cdots x_n$ is the only squarefree part of $I^{(c)}$. This implies we can also find $k$ generators from $I$ that do the same things.

By Theorem \ref{theorem: formula for minimal generators of sym pow of star config}, we have $I^2$ contains $I^{(c)}$ since the non-squarefree generator of $I^{(c)}$ come from multiple generators from lower symbolic powers. Furthermore, following the same reason, and $\ceil{\frac{m}{c-1}} \leq k \floor{\frac{m}{c}}$ for all $m > c$, we will have $I^{\ceil{\frac{m}{c-1}}}$ contains $I^{(m)}$ for all $m > c$.

\vspace{2mm}

$(2)$ When $\Delta$ is peaked, without loss of generality, let $\{ \mathcal{F}_1, ... \, , \mathcal{F}_k  \}$ be the collection satisfying the peaked condition. By ideal intersection, observe that $x_1 \cdots x_k$ and $x_{k+1} \cdots x_n$ are generators of $I$. The product of these two generators divides $x_1 \cdots x_n$, again we can follow the proof of Proposition \ref{prop: peak and containment}(1), the proof is complete.
\end{proof}

\begin{theorem}\label{theorem: resurgence bound of matroidal configuration}

let $k = \floor{\frac{n}{n-c+1}}$ where $n$ is the number of components used in the $C$-matroidal ideal $I$, $c$ is the height of $I$. Then if $k > 1$, or $k = 1$ and the corresponding simplicial complex $\Delta$ is peaked, we have resurgence upper bound for $I$ as follows:
\begin{align*}
\rho(I) \leq c-1
\end{align*}

In particular, this result can be also applied to corresponding matroidal configuration $I_*$.

\end{theorem}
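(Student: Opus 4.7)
The plan is to deduce this theorem as a direct corollary of Proposition \ref{prop: peak and containment}, because the hypothesis of the theorem is exactly the hypothesis of that proposition, and the containments supplied there are already very close to a resurgence bound of $c-1$.

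First, I would invoke Proposition \ref{prop: peak and containment} to obtain $I^{(m)} \subseteq I^{\ceil{m/(c-1)}}$ for all $m > c$ and $I^{(c)} \subseteq I^{2}$. I would also record the elementary fact $I^{(m)} \subseteq I$ for all $m \geq 1$, which is valid because a squarefree monomial ideal satisfies $I^{(m)} = \bigcap_{\pp \in \Min(I)} \pp^{m} \subseteq \bigcap_{\pp} \pp = I$. These three facts together determine, for every $m$, the smallest $r$ for which $I^{(m)} \not\subseteq I^{r}$ is even possible.

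Next, I would contrapose. A pair $(m,r)$ contributes to the supremum defining $\rho(I)$ only when $I^{(m)} \not\subseteq I^{r}$, and the containments above force $r \geq \ceil{m/(c-1)} + 1$ if $m > c$, $r \geq 3$ if $m = c$, and $r \geq 2$ if $1 \leq m < c$. In the main range $m > c$ this yields
\[
\frac{m}{r} \;\leq\; \frac{m}{\ceil{m/(c-1)} + 1} \;<\; \frac{m}{m/(c-1)} \;=\; c-1,
\]
while the two small-$m$ cases yield $m/r \leq c/3$ and $m/r \leq (c-1)/2$, both bounded by $c-1$ as soon as $c \geq 2$ (the only interesting regime, since $c=1$ forces $I$ to be a single variable and the statement becomes vacuous). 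Taking the supremum over all eligible pairs gives $\rho(I) \leq c-1$.

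Finally, since Proposition \ref{prop: peak and containment} transfers to the matroidal configuration $I_*$ via Remark \ref{C-matroidal ideal and matroidal configuration}, the identical bookkeeping yields $\rho(I_*) \leq c-1$ with no extra work. I do not anticipate a real obstacle in this argument: all the substantive combinatorial content was already absorbed into Proposition \ref{prop: peak and containment}, and what remains is essentially an arithmetic translation. The only minor care point is verifying that the small-$m$ edge cases never produce a ratio exceeding $c-1$, and the two inequalities above handle this cleanly.
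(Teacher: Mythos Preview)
Your proposal is correct and follows essentially the same route as the paper: both arguments invoke Proposition \ref{prop: peak and containment}, contrapose to force a lower bound on $r$ whenever $I^{(m)} \not\subseteq I^{r}$, and then bound $m/r$ by $c-1$ in the ranges $m \geq c$ and $m < c$ separately. The only cosmetic difference is that the paper folds the case $m=c$ into the range $m\geq c$ (since $\ceil{c/(c-1)}=2$ matches the containment $I^{(c)}\subseteq I^{2}$), whereas you treat it as its own edge case; your explicit handling of $c=1$ is an extra sanity check the paper omits.
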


\begin{proof}
Following proposition \ref{prop: peak and containment}, for all $m \geq c$, $I^r \not\supset I^{(m)}$, then it must satisfy $r > \ceil{\frac{m}{c-1}}$. Note that when $m = 2$, $\ceil{\frac{m}{c-1}} = 2$, it will exclude the situation that $I^2 \supset I^{(c)}$. It is equivalent to have the following:
\begin{align*}
\frac{m}{r} < \frac{m}{\ceil{\frac{m}{c-1}}} \leq c-1
\end{align*}

For all $m < c$, $I \supset I^{(m)}$, in this case, $I^r \not\supset I^{(m)}$, it must satisfy the following:
\begin{align*}
\frac{m}{r} < m \leq c-1
\end{align*}

Combining two cases, we have $\rho(I) \leq c-1$, the result can be applied to $I_*$ since the ideal containment is preserved.
\end{proof}

If $k=1$ and $\Delta$ is not \emph{peaked}, the bound in Theorem \ref{theorem: resurgence bound of matroidal configuration} will fail easily. Here is the simple example:

\begin{example}\label{example: non-peaked condition fails}

Considering a simplicial complex defined on vertex set $\{ 1, 2, 3, 4 \}$ with facets $\{ \{1,2\},\{1,3\},$ $\{1,4\},\{2,3\}  \}$, obviously it is not \emph{peaked}. The generators of the corresponding $C$-matroidal ideal $I$ is $\{ x_1 x_3, x_1 x_2 x_4, x_2 x_3 x_4 \}$, and the height $c=2$. We need the resurgence number of $I$ is greater than $c-1$, equivalently it means that there exists $m$ and $r$ such that $\frac{m}{r} > c-1$ and $I^{(m)} \not\subseteq I^r$. For this example, we need $m > r$ and $I^{(m)} \not\subseteq I^r$. It's known that the only squarefree part of $c$ symbolic power of the $C$-matroidal ideal is always $x_{i_1} \cdots x_{i_n}$, so the only squarefree part of $I^{(2)}$ is $x_1 x_2 x_3 x_4$. We want to show that $I^{(8)} \not\subseteq I^7$. Note that $x_1^4 x_2^4 x_3^4 x_4^4 \in I^{(8)}$, but it is also the smallest degree term in $I^6$ that divides $x_1^4 x_2^4 x_3^4 x_4^4$, so it is impossible that $I^7$ contains $x_1^4 x_2^4 x_3^4 x_4^4$. 

\end{example}

\begin{theorem}\label{theorem: resurgence number of generalized star configuration}

Let $I$ be the $C$-matroidal ideal corresponding to uniform matroid $U_{c,n}$. Then we have the followings:

\begin{itemize}

\item[(1)] The strict containment:
\begin{align*}
I^{r} \; \text{contains} \; I^{(m)} \; \text{if only if} \; r \leq \frac{\ceil{\frac{m}{c}} (n-c) + m}{n-c+1}
\end{align*}

\item[(2)] And the resurgence number:
\begin{align*}
\rho(I) = \frac{c(n-c+1)}{n}
\end{align*}

In particular, if we apply the formula to its corresponding matroidal configuration, let $n=s$ where $s$ is the number of homogeneous components, then it is also the resurgence number of the star configuration.
\end{itemize}

\end{theorem}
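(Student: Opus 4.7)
Plan. The $C$-matroidal ideal of $U_{c,n}$ equals $I = \bigcap_{|F|=c}\mathfrak{p}_F$ over $c$-subsets $F \subseteq [n]$, so $I$ is generated by all squarefree monomials of degree $d:=n-c+1$. The plan is to reduce the containment problem to two combinatorial criteria, apply them to the minimal generators produced by \Cref{theorem: formula for minimal generators of sym pow of star config}, and read off both parts. Criterion \textbf{(A)}: $x^{\a}\in I^{(m)}$ iff the sum of the $c$ smallest entries of $\a$ is at least $m$ (immediate from $I^{(m)}=\bigcap_{|F|=c}\mathfrak{p}_F^m$). Criterion \textbf{(B)}: $x^{\a}\in I^r$ iff $\sum_{i=1}^n \min(a_i,r) \geq rd$, since $x^{\a}\in I^r$ is exactly the statement that $\a$ dominates a sum of $r$ indicator vectors of $d$-subsets of $[n]$; necessity is immediate because column $i$ is covered at most $\min(a_i,r)$ times, and sufficiency is a Gale--Ryser style greedy construction.

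Now fix a minimal generator $M = M_{(1)}\cdots M_{(t)} \in G_{I^{(m)}}$ with $|\supp(M_{(i)})|=n-c+c_i$, $c_1 \geq \cdots \geq c_t \geq 1$, $\sum c_i = m$. The weakly nested supports force the sorted exponent vector $\a$ of $M$ to have the layered shape $(0^{c-c_1}, 1^{c_1-c_2}, \ldots, t^{\,n-c+c_t})$. An Abel summation then yields
\begin{equation*}
\sum_{i=1}^n \min(a_i,r) =
\begin{cases}
\sum_{j=1}^r c_j + r(n-c) & \text{if } r \leq t,\\
m + t(n-c) & \text{if } r \geq t,
\end{cases}
\end{equation*}
which, combined with \textbf{(B)} and the trivial bound $\sum_{j=1}^r c_j \geq r$, collapses to the clean statement $M \in I^r \iff r \leq \frac{m+t(n-c)}{n-c+1}$.

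Part (1) follows at once: each $c_i \leq c$ forces $t \geq \ceil{m/c}$, so the bound $\frac{m+t(n-c)}{n-c+1}$ is minimized at $t_0 = \ceil{m/c}$, and this minimum is realized by the explicit generator $M^{\ast} = (x_1\cdots x_n)^{\,t_0-1}\cdot\prod_{i\in S}x_i$ with $|S|=n-c+(m-(t_0-1)c)$, witnessing sharpness on the non-containment side. For Part (2), substituting Part (1) into the definition of resurgence and observing that $\frac{m(n-c+1)}{\ceil{m/c}(n-c)+m}\leq \frac{c(n-c+1)}{n}$ with equality exactly when $c \mid m$ gives $m/r < \frac{c(n-c+1)}{n}$ on every non-containment pair; this bound is asymptotically attained along $m=kc$ as $k \to \infty$, so $\rho(I)=\frac{c(n-c+1)}{n}$. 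The matroidal- and star-configuration statements then transfer via \Cref{C-matroidal ideal and matroidal configuration}.

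The main technical hurdle is the piecewise Abel-summation computation of $\sum_i \min(a_i,r)$ on the layered exponent vector---handling the regimes $r \leq t$ and $r \geq t$ uniformly and showing they agree at $r=t$---together with a careful Gale--Ryser sufficiency proof of criterion \textbf{(B)}, which is needed so that $M^{\ast}$ provides a concrete witness for the non-containment direction of Part (1) while also yielding containment for every minimal generator on the other side.
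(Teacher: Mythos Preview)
Your proposal is correct and shares the paper's overall architecture: both arguments list the minimal generators of $I^{(m)}$ via \Cref{theorem: formula for minimal generators of sym pow of star config}, observe that for a generator built from $t$ nested squarefree factors the total degree is $t(n-c)+m$, identify $t_0=\ceil{m/c}$ as the extremal case, and then reduce membership in $I^r$ to a combinatorial factoring statement. Where the two diverge is in how that factoring step is packaged.

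The paper proves the one-sided \Cref{lemma: monomial assignment} (a monomial of degree $ab$ with exponents $\le b$ factors into $b$ squarefree pieces of degree $a$) and uses it only for the containment direction; non-containment is handled by the crude degree inequality $\deg I^r \ge r(n-c+1)$ applied to the minimum-degree generator. Your Criterion~\textbf{(B)} is the natural two-sided extension of that lemma: $x^{\a}\in I^r$ iff $\sum_i\min(a_i,r)\ge r(n-c+1)$. Its sufficiency is exactly \Cref{lemma: monomial assignment} after truncating $\a$ to a vector $\a'\le\a$ with entries $\le r$ and total $r(n-c+1)$, so your ``Gale--Ryser style'' construction is not an additional hurdle but essentially the same greedy argument already present in the paper. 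The payoff is that your Abel-summation computation yields the exact per-generator threshold $r\le\frac{t(n-c)+m}{n-c+1}$, from which both directions of Part~(1) and the minimization over $t$ fall out uniformly, whereas the paper treats the lowest-degree generator and the higher-$t$ generators in separate ad~hoc paragraphs. Criterion~\textbf{(A)} is stated but not actually used in your argument, since you take the minimal generators directly from \Cref{theorem: formula for minimal generators of sym pow of star config}; you could drop it without loss.
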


\bigskip

It's not hard to see that the simplicial complex corresponding to \emph{generalized uniform matroid} is not \emph{peaked}, but it does satisfy a nice property, which is the following lemma.

\begin{lemma}\label{lemma: monomial assignment}

Let $M$ be a momomial defined on a finite set of variables $\{x_1, ... , x_n \}$, and $n$ can be arbitrary big. Suppose degree of each variable of $M$ is at most $m$. If there exists $a$,$b$ such that $deg(M) = ab$ and $b \geq m$, then $M$ can be factored into $b$ squarefree monomials of same degree $a$.

\end{lemma}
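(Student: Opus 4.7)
The plan is to translate the factorization into a combinatorial bin-packing problem and then resolve it by an explicit cyclic (round-robin) assignment. First I would write $M = x_1^{e_1}\cdots x_n^{e_n}$ with each $e_i \le m$ and $\sum_i e_i = ab$, and observe that a factorization of $M$ into $b$ squarefree monomials of degree $a$ is exactly the data of a $b \times n$ matrix with entries in $\{0,1\}$, all row sums equal to $a$, and $i$-th column sum equal to $e_i$. So the task reduces to producing such a $0/1$ matrix under the hypotheses $e_i \le m$ and $b \ge m$. As a sanity check, one can note that the combined bounds force $a \le n$ (since $ab = \sum_i e_i \le nm \le nb$), which is necessary for squarefree factors of degree $a$ to exist at all.

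The construction I would try is cyclic. List the prime factors of $M$ in a linear sequence of length $ab$, grouping all $e_i$ copies of each $x_i$ consecutively, and then send the symbol in position $k$ (for $0 \le k \le ab-1$) into bin $k \bmod b$. Reading off the $b$ bins gives the proposed factors, after which two things remain to be verified: each bin holds exactly $a$ symbols, and each bin is squarefree.

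The balance of bin sizes is immediate, since $b$ divides $ab$ and the cyclic distribution is uniform. The squarefree check is the only nontrivial step, and it is precisely where the hypothesis $b \ge m$ enters: the $e_i$ copies of $x_i$ occupy $e_i$ consecutive positions, hence their images in $\ZZ/b\ZZ$ form $e_i$ consecutive residues, and these are pairwise distinct because $e_i \le m \le b$. Therefore each bin receives at most one copy of $x_i$, as required. I expect this modular step to be the main place where the argument must be spelled out carefully; the actual starting positions $e_1 + \cdots + e_{i-1}$ of the blocks play no role in the conclusion, only the fact that each block has length at most $b$ is used, so the verification is really a one-line observation about intervals in $\ZZ/b\ZZ$ once the reformulation is in place.
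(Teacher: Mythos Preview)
Your proof is correct. The cyclic assignment does exactly what you claim: each of the $b$ residue classes mod $b$ receives $a$ of the $ab$ positions, and since the $e_i$ copies of $x_i$ occupy a block of $e_i \le m \le b$ consecutive positions, their images mod $b$ are distinct, so no bin sees $x_i$ twice. Nothing is missing.

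Your route, however, is genuinely different from the paper's. The paper argues by induction on $b$: after sorting $p_1 \ge p_2 \ge \cdots \ge p_n$, it checks that $a \le n$, $p_a \ge 1$, and $p_{a+1} \le b-1$, peels off the single squarefree factor $N = x_1\cdots x_a$, and observes that $M/N$ has all exponents at most $b-1$ and total degree $a(b-1)$, so one may recurse with $b$ replaced by $b-1$. Your argument instead produces all $b$ factors at once by a single modular placement, with no induction and no sorting. What your approach buys is a closed-form, explicit description of the factorization; what the paper's approach buys is a slightly more transparent verification at each step (it is literally ``take the $a$ variables that currently have the largest exponents''). Both arguments hinge on the same inequality $e_i \le b$; yours uses it to guarantee distinct residues, the paper uses it to guarantee the residual exponents drop below the new threshold $b-1$.
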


\begin{proof}

Let $M = x_1^{p_1} x_2^{p_2} ... x_n^{p_n}$. Without loss of generality, we might assume $p_1 \geq p_2 \geq ... \geq p_n$. We will prove the statement by recursive process.

When $b=1$, $M$ is squarefree, then we are done.

If $b \geq 2$, we can observe the following facts:

\begin{itemize}

\item[(1)] $a \leq n$. Because $deg(M) = ab \leq nb$.

\item[(2)] $p_a \geq 1$. If $p_a = 0$, $deg(M) = p_1 + p_2 + ... p_a \leq (a-1)b < ab$.

\item[(3)] $p_{a+1} \leq b-1$. If $p_{a+1} \geq b$, $deg(M) \geq p_1 + p_2 + ... + p_{a+1} = (a+1)b > ab$.

\end{itemize}

Let $N = x_1 x_2 ... x_a$ be our new factor, we can always do this by $(1)$ and $(2)$, and note that the variables of remaining monomial $M/N$ are at most $b-1$ by $(3)$. Now $deg(M/N) = a(b-1)$, apply the above factorization recursively by $b-1$, $b-2$, ... , $1$, we will get the factorization we want.

\end{proof}

\begin{proof}[Proof of Theorem \ref{theorem: resurgence number of generalized star configuration}]

First by Proposition \ref{prop: combinatorial correspondence of squarefree parts} and the definition of uniform matroid, $I$ is exactly the intersection ideal of all $(c-1)$-faces from a simplex $F$ of dimension $n-1$, then the $SF(I^{(m)})$ corresponds to $(n+m-c-1)$-faces of $F$. Then by Theorem \ref{theorem: formula for minimal generators of sym pow of star config} and Corollary \ref{corollary: binomial expansion of sym pow of star config}, every symbolic power can be represented by the sum of products of squarefree parts of first $c$ symbolic power by inductive process. Observe that the lowest degree of  generators of $I^{(m)}$ is $\ceil{\frac{m}{c}}(n-c) + m$ because the lowest degree of the generator can be written as $\ceil{\frac{m}{c}}$ product of squarefree generators from first $c$ symbolic powers. For simplicity, let it be $M^{\ceil{\frac{m}{c}}} = M_{(1)} M_{(2)} ... M_{(\ceil{\frac{m}{c}})}$.

Note that $\ceil{\frac{m}{c}}(n-c+1) = \ceil{\frac{m}{c}} (n-c) + \ceil{\frac{m}{c}} \leq \ceil{\frac{m}{c}} (n-c) + m$ , we want to show that for all $r \geq \ceil{\frac{m}{c}}$ such that $r(n-c+1) \leq \ceil{\frac{m}{c}}(n-c) + m$, we have $I^r$ contains $I^{(m)}$.

If $r(n-c+1) = \ceil{\frac{m}{c}}(n-c) + m$, then applying Lemma \ref{lemma: monomial assignment}, the generators in $I^r$ generate $M^{\ceil{\frac{m}{c}}}$.

If $r(n-c+1) < \ceil{\frac{m}{c}}(n-c) + m$. Since $deg(M_{(i)}) \geq n-c+1$ and $\sum_{i=1}^{\ceil{\frac{m}{c}}} deg(M^{(i)}) = \ceil{\frac{m}{c}}(n-c) + m$. Now considering to remove variables from some $M_{(i)}$'s such that their degrees are greater and equal to $n-c+1$, and also make the corresponding new monomial $\overline{M}^{\ceil{\frac{m}{c}}}$ having degree exactly $r(n-c+1)$. Apply Lemma \ref{lemma: monomial assignment} again, generators of $I^r$ generate $\overline{M}^{\ceil{\frac{m}{c}}}$, then so $M^{\ceil{\frac{m}{c}}}$. 

Note that the higher degree generators of $I^{(m)}$ are products of $\ceil{\frac{m}{c}} + i$ squarefree parts of first $c$ symbolic powers of $I$, denoted as $M_{\ceil{\frac{m}{c}}+i}$, where $1 \leq i \leq m - \ceil{\frac{m}{c}}$ by Corollary \ref{corollary: binomial expansion of sym pow of star config}. Furthermore, $deg(M^{\ceil{\frac{m}{c}}+i}) = (\ceil{\frac{m}{c}}+i)(n-c) + m > \ceil{\frac{m}{c}}(n-c) + m$. By the same methods above, there must exists $r' \geq r$ such that $I^{r'}$ contains $M^{\ceil{\frac{m}{c}}+i}$, so $I^r$ contains the higher degree generators. Therefore we only need to focus on the lowest degree generators, it follows that $I^r$ contains $I^{(m)}$. However, if $r(n-c+1) > \ceil{\frac{m}{c}}(n-c) + m$, $I^r$ won't contain the lowest degree generators of $I^{(m)}$. In conclusion, we have the following strict containment:
\begin{align*}
I^{r} \text{contains} \; I^{(m)} \; \text{iff} \; r \leq \frac{\ceil{\frac{m}{c}} (n-c) + m}{n-c+1}
\end{align*}

Furthermore, for all $I^{r} \not\supset I^{(m)}$, it must satisfy the following:
\begin{align*} 
\frac{m}{r} < \frac{n-c+1}{\frac{1}{m} \ceil{\frac{m}{c}} (n-c) + 1}
\end{align*}

Observe that supremum of right side of the inequality is the resurgence number, which is the following:
\begin{align*}
\rho(I) = \sup_{m \geq 1} \biggl\{ \frac{n-c+1}{\frac{1}{m} \ceil{\frac{m}{c}} (n-c) + 1} \biggl\} = \frac{n-c+1}{\frac{1}{m} \cdot \frac{m}{c} (n-c) + 1} = \frac{c(n-c+1)}{n}
\end{align*} 

\end{proof}

\begin{remark}\label{remark: special cases of star config}
\cite[Theorem 4.8]{MCSASPOTI} give the formula to compute resurgence number of star configuration under the condition  that all homogenous component have same degree, while \cite[Theorem 2.4.3(a)]{BocciHarbourne2010} is about star configuration of points on $\mathbb{P}^n$. These two formulas coincides with our formula under corresponding restrictions respectively.

\end{remark}

	\bibliographystyle{alpha}
	\bibliography{References}

\end{document}